\newtheorem*{thm*}{Theorem}
\newtheorem{thm}{Theorem}[section]
\newtheorem*{subadditivitythm}{Subadditivity Theorem}
\theoremstyle{definition}
\newtheorem{example}[thm]{Example}
\newtheorem{remark}[thm]{Remark}
\newtheorem{prop}[thm]{Proposition}
\newtheorem{lem}[thm]{Lemma}
\newtheorem{property}[thm]{Property}
\DeclareMathOperator{\lct}{lct}
\DeclareMathOperator{\mult}{mult}
\DeclareMathOperator{\Zeros}{Zeros}
\DeclareMathOperator{\bight}{bight}
\DeclareMathOperator{\height}{ht}
\DeclareMathOperator{\conv}{conv}
\DeclareMathOperator{\ord}{ord}
\newcommand{\C}{\mathbb{C}}
\newcommand{\pr}[1]{\mathbb{P}^{#1}}
\newcommand{\symbolicpower}[2]{{#1}^{(#2)}}
\newcommand{\symbi}{\symbolicpower{I}{\bullet}}
\newcommand{\symbsystem}[1]{\symbolicpower{#1}{\bullet}}
\newcommand{\multideal}{\mathcal{J}}
\newcommand{\asymptoticmultideal}[2]{\multideal \left( #1 \cdot #2 \right)}
\newcommand{\asympt}[2]{\asymptoticmultideal{#1}{\symbsystem{#2}}}
\newcommand{\defining}[1]{\textbf{#1}}
\DeclareMathOperator{\Int}{Int}
\DeclareMathOperator{\codim}{codim}
\renewcommand{\a}{\mathfrak{a}}
\DeclareMathOperator{\monom}{monom}
\DeclareMathOperator{\Newt}{Newt}
\title{Bounding symbolic powers via asymptotic multiplier ideals}
\author{Zach Teitler}
\email{zteitler@math.tamu.edu}
\address{Department of Mathematics \\ Mailstop 3368 \\ Texas A\&M University \\ College Station, TX 77843-3368 \\ USA}
\date{September 22, 2009}
\thanks{Acknowledgements: I am grateful to Brian Harbourne for inviting me to write this material
(originally as an appendix to lecture notes~\cite{harbourne-lecture-notes}
for a course he gave at a summer school in Cracow in March, 2009)
and for numerous helpful conversations.}
\subjclass[2000]{14B05}
\begin{document}

\bibliographystyle{amsalpha}       

\maketitle

We revisit a bound on symbolic powers found by Ein--Lazarsfeld--Smith and subsequently improved by Takagi--Yoshida.
We show that the original argument of~\cite{els:symbolic-powers} actually gives the same improvement.
On the other hand, we show by examples that any further improvement based on the same technique appears unlikely.
This is primarily an exposition; only some examples and remarks might be new.

\section{Uniform bounds for symbolic powers}

For a radical ideal $I$, the symbolic power $\symbolicpower{I}{p}$ is the collection of elements
that vanish to order at least $p$ at each point of $\Zeros(I)$.
If $I$ is actually prime, then $\symbolicpower{I}{p}$ is the $I$-associated primary component of $I^p$;
if $I$ is only radical, writing $I = C_1 \cap \dots \cap C_s$ as an intersection of prime ideals,
$\symbolicpower{I}{p} = \symbolicpower{C}{p}_1 \cap \dots \cap \symbolicpower{C}{p}_s$. %
The inclusion $I^p \subseteq \symbolicpower{I}{p}$ always holds,
but the reverse inclusion holds only in some special cases,
such as when $I$ is a complete intersection.

Swanson~\cite{MR1778408} showed that for rings $R$ satisfying a certain hypothesis,
for each ideal $I$, there is an integer $e=e(I)$ such that the symbolic power
$\symbolicpower{I}{e r} \subseteq I^r$ for all $r\geq0$.
Ein--Lazarsfeld--Smith~\cite{els:symbolic-powers} showed that in a regular local ring $R$
in equal characteristic $0$ and for $I$ a radical ideal,
one can take $e(I)=\bight(I)$, the \defining{big height} of $I$,
which is the maximum of the codimensions
of the irreducible components of the closed subset of zeros of $I$.
In particular, $\bight(I)$ is at most the dimension of the ambient space,
so $e=\dim R$ is a single value that works for all ideals.
More generally, for any $k\geq0$, $\symbolicpower{I}{er+kr} \subseteq (\symbolicpower{I}{k+1})^r$ for all $r\geq1$.
Very shortly thereafter, Hochster--Huneke~\cite{MR1881923} generalized this result by characteristic $p$ methods.

It is natural to regard these results in the form $\symbolicpower{I}{m} \subseteq I^r$
for $m \geq f(r) = er$, $e=\bight(I)$.
Replacing $f(r)=er$ with a smaller function would give a stronger bound on symbolic powers
(containment in $I^r$ would begin sooner).
So it is natural to ask, how far can one reduce the bounding function $f(r)=er$?

Bocci--Harbourne~\cite{bocci-harbourne} introduced the \defining{resurgence} of $I$,
$\rho(I) = \sup\{ m/r : \symbolicpower{I}{m} \not\subseteq I^r \}$.
Thus if $m > \rho(I) r$, $\symbolicpower{I}{m} \subseteq I^r$.
The Ein--Lazarsfeld--Smith and Hochster--Huneke results show $\rho(I) \leq \bight(I) \leq \dim R$.
It can be smaller.
For example, if $I$ is smooth or a reduced complete intersection, $\rho(I)=1$.
More interestingly, Bocci--Harbourne~\cite{bocci-harbourne}
show that if $I$ is an ideal of $n$ reduced points in general position
in $\pr{2}$, $\rho(I) = \rho_n \leq 3/2$.
On the other hand, Bocci--Harbourne show for each $n$, $1 \leq e \leq n$,
and $\epsilon > 0$ there are ideals $I$ on $\pr{n}$ with $\bight(I)=e$
such that $\rho(I) > e-\epsilon$.
This suggests that one cannot expect improvement
in the slope of the linear bound $m \geq er$, at least not in very general terms.
So one naturally turns toward the possibility of subtracting a constant term.

Huneke raised the question of whether, for $I$ an ideal of reduced points in $\pr{2}$,
$\symbolicpower{I}{3} \subseteq I^2$.
Bocci--Harbourne's result $\rho \leq 3/2$ gives an affirmative answer to Huneke's question, and much more,
for points in general position.
Some other cases have been treated, e.g., points on a conic, but the general case---i.e.,
points in arbitrary position---remains open.

A conjecture of Harbourne (Conjecture~8.4.3 in \cite{seshadri}) states that for a homogeneous ideal $I$ on $\pr{n}$,
$\symbolicpower{I}{m} \subseteq I^r$ for all $m \geq nr-(n-1)$,
and even stronger, that the containment holds
for all $m \geq er-(e-1)$, where $e=\bight(I)$.
Huneke's question would follow at once as the case $n=e=r=2$.

Some results in this direction have been obtained by various authors.
Huneke has observed that Harbourne's conjecture holds in characteristic $p$
for values $r = p^k$, $k \geq 1$, see Example~IV.5.3 of \cite{harbourne-lecture-notes}
or Example~8.4.4 of \cite{seshadri}.
Takagi--Yoshida~\cite{takagi-yoshida} and Hochster--Huneke independently
showed by characteristic $p$ methods
that $\symbolicpower{I}{er+kr-1} \subseteq (\symbolicpower{I}{k+1})^r$ for all $k\geq 0$ and $r \geq 1$
when $I$ is $F$-pure (see below).
More generally, Takagi--Yoshida show a characteristic $p$ version of the following:
\begin{thm}[\cite{takagi-yoshida}]\label{thm:main}
Let $R$ be a regular local ring of equal characteristic $0$, $I \subseteq R$ a reduced ideal,
$e = \bight(I)$ the greatest height of an associated prime of $I$,
and $\ell$ an integer, $0 \leq \ell < \lct(\symbi)$ where $\lct(\symbi)$
is the log canonical threshold of the graded system of symbolic powers of $I$, see below.
Then $\symbolicpower{I}{m} \subseteq I^r$ whenever $m \geq er - \ell$.
More generally, for any $k \geq 0$, $\symbolicpower{I}{m} \subseteq (\symbolicpower{I}{k+1})^r$
whenever $m \geq er+kr-\ell$.
\end{thm}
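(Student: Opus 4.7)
The plan is to follow the original Ein--Lazarsfeld--Smith strategy using asymptotic multiplier ideals and subadditivity. Only the first step of their argument requires modification, to incorporate the hypothesis $\ell < \lct(\symbi)$.

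The key new ingredient is the following refinement of the fundamental ELS containment:
\[
\symbolicpower{I}{(e+k)r - \ell} \subseteq \asympt{(k+1)r}{I}.
\]
To establish this, fix a log resolution $\pi : Y \to X$ simultaneously resolving $\symbolicpower{I}{p}$ for all sufficiently large and divisible $p$, and verify the membership condition for the asymptotic multiplier ideal at every prime divisor $E$ on $Y$. The condition is trivial at divisors with $v_E(\symbi) = 0$, so assume $v_E(\symbi) > 0$. Two ingredients then enter: for $f \in \symbolicpower{I}{m}$, subadditivity of the graded system $\symbi$ along $v_E$ gives $v_E(f) \geq m \cdot v_E(\symbi)$, and the definition of $\lct(\symbi)$ yields $v_E(K_{Y/X}) + 1 \geq \lct(\symbi) \cdot v_E(\symbi)$. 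Adding these,
\[
v_E(f) + v_E(K_{Y/X}) \geq \bigl(m + \lct(\symbi)\bigr) v_E(\symbi) - 1.
\]
With $m = (e+k)r - \ell$, a direct computation gives
\[
m + \lct(\symbi) - (k+1)r = (e-1)r + \bigl(\lct(\symbi) - \ell\bigr) > 0
\]
by the strict hypothesis $\ell < \lct(\symbi)$. Hence $\bigl(m + \lct(\symbi)\bigr) v_E(\symbi) > (k+1)r \cdot v_E(\symbi) \geq \lfloor (k+1) r \cdot v_E(\symbi) \rfloor$. Since $v_E(f) + v_E(K_{Y/X})$ is an integer, this strict inequality, combined with the lower bound above, forces $v_E(f) + v_E(K_{Y/X}) \geq \lfloor (k+1) r \cdot v_E(\symbi) \rfloor$, which is precisely the membership condition for the asymptotic multiplier ideal.

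With the refined key containment in hand, the remainder of the argument proceeds exactly as in Ein--Lazarsfeld--Smith: subadditivity of asymptotic multiplier ideals together with the standard comparisons between asymptotic multiplier ideals and symbolic powers combine to deliver the further containment $\asympt{(k+1)r}{I} \subseteq (\symbolicpower{I}{k+1})^r$. Composing with the refined key containment yields $\symbolicpower{I}{m} \subseteq (\symbolicpower{I}{k+1})^r$ for all $m \geq er + kr - \ell$, completing the proof.

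The principal technical hurdle is the careful bookkeeping of the floor function in the multiplier ideal condition; the strictness of $\ell < \lct(\symbi)$ is essential for converting the real-valued estimate $m + \lct(\symbi) > (k+1)r$ into the integer-valued inequality demanded by the floor.
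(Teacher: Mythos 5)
Your first, valuative step is aimed at the wrong target, and this breaks the second half of the argument. You land $\symbolicpower{I}{(e+k)r-\ell}$ in $\asympt{(k+1)r}{I}$ and then claim $\asympt{(k+1)r}{I} \subseteq (\symbolicpower{I}{k+1})^r$. But the only comparison available in that direction is Example~\ref{example: asymptotic multiplier ideal of smooth}: $\asympt{t}{I} \subseteq \symbolicpower{I}{k+1}$ requires $t \geq e+k$, not $t \geq k+1$, because near a general point of a codimension-$e$ component one has $\asympt{t}{I} = I^{\lfloor t\rfloor +1-e}$ locally. For $e \geq 2$ your claimed containment is simply false: for $I=(xy,xz,yz)$ with $e=2$, $k=0$, $r=2$ one computes $\asympt{(k+1)r}{I} = \asympt{2}{I} = I \not\subseteq I^2$ (and $\asympt{1}{I}=(1)$ since $\lct(\symbi)=2$), so your chain cannot prove $\symbolicpower{I}{3}\subseteq I^2$ in this case. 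The coefficient $e$ in $er+kr-\ell$ exists precisely because the multiplier-ideal parameter must climb all the way to $(e+k)r$ before subadditivity plus Example~\ref{example: asymptotic multiplier ideal of smooth} can bring you back down into $(\symbolicpower{I}{k+1})^r$.

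The fix is cheap, because your floor-function estimate has far more slack than you use: aiming at $\asympt{(e+k)r}{I}$ instead, the relevant quantity is $m+\lct(\symbi)-(e+k)r = \lct(\symbi)-\ell>0$, and the identical integrality argument gives the correct key containment $\symbolicpower{I}{(e+k)r-\ell}\subseteq\asympt{(e+k)r}{I}$; then subadditivity and Example~\ref{example: asymptotic multiplier ideal of smooth} finish exactly as in~\eqref{eqn:chain}. Once corrected, your route is essentially the paper's, except that the paper obtains the key containment without touching a resolution, from $\symbolicpower{I}{m}\cdot\asympt{\ell}{I}\subseteq\asympt{(m+\ell)}{I}$ (Remark~\ref{rem: inclusion in asymptotic multiplier ideals}) together with $\asympt{\ell}{I}=(1)$ for $\ell<\lct(\symbi)$. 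I would encourage you to use that packaged statement: your resolution-level sketch quietly assumes a single log resolution dominating all $\symbolicpower{I}{p}$ and states the membership condition in terms of the limit $v_E(\symbi)$, whereas $\asympt{t}{I}$ is actually computed as $\multideal((\symbolicpower{I}{p})^{t/p})$ for one sufficiently divisible $p$, with the floor taken of $\tfrac{t}{p}v_E(\symbolicpower{I}{p}) \geq t\,v_E(\symbi)$ --- so a lower bound phrased through $v_E(\symbi)$ does not directly yield the inequality the definition demands.
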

This statement is a slight modification of Remark 3.4 of \cite{takagi-yoshida}.

The Ein--Lazarsfeld--Smith uniform bounds on symbolic powers described above are the case $\ell=0$.
The $F$-pure case implies $\lct(\symbi) > 1$, so we may take $\ell=1$.
(More precisely, $F$-pure means $\lct(I)>1$, and we will see $\lct(\symbi) \geq \lct(I)$.)

The idea of the proof is to produce an ideal $J$ with
$\symbolicpower{I}{m} \subseteq J$ and $J \subseteq (\symbolicpower{I}{k+1})^r$.
Ein--Lazarsfeld--Smith introduced asymptotic multiplier ideals in~\cite{els:symbolic-powers}
and, among other results, proved the uniform bounds described above by taking $J$ to be an asymptotic multiplier ideal.
For Takagi--Yoshida the ideal $J$ is a generalized test ideal, a characteristic $p$ analogue
of the asymptotic multiplier ideals introduced by Hara--Yoshida~\cite{MR1974679}.

In this note, $J$ will be an asymptotic multiplier ideal.
We will review multiplier ideals in \S\ref{sect: mult-ideals}
and discuss some examples in \S\ref{sect: examples}:
the asymptotic multiplier ideals of monomial ideals and hyperplane arrangements.
We will revisit the argument given by Ein--Lazarsfeld--Smith in the case $\ell=0$
to show that it actually gives Theorem~\ref{thm:main} in \S\ref{sect: proof}.

In \S\ref{sect: non-improvement} we consider two ways in which
the argument of \S\ref{sect: proof} falls short of the improved bounds we hope for.
First, the condition $0 \leq \ell < \lct(\symbi)$, while generalizing the result of \cite{els:symbolic-powers},
is nevertheless quite restrictive.
Second, the argument of~\cite{els:symbolic-powers} actually produces two ideals,
$\symbolicpower{I}{m} \subseteq J_1 \subseteq J_2 \subseteq (\symbolicpower{I}{k+1})^r$.
We will consider as an example the ideal $I=(xy,xz,yz)$ of the union of the three coordinate axes in $\C^3$.
We will show that in this example the first and last inclusions are actually equalities,
while the middle inclusion $J_1 \subseteq J_2$ is very far.
So if any improvement remains to be found, one must consider the middle inclusion.

\section{Multiplier ideals}\label{sect: mult-ideals}

Henceforth we fix $X = \C^n$ and consider ideals in the ring $R = \C[x_1,\dots,x_n]$.

Note that for a prime homogeneous ideal $I$, a homogeneous form $F$ vanishes to order $p$
along the projective variety defined by $I$ in $\pr{n-1}$ if and ony if it vanishes to order $p$ on the affine variety
defined by $I$ in $\C^n$.
In this way the Bocci--Harbourne results and Huneke question for points in $\pr{2}$
translate to questions about symbolic powers of (homogeneous) ideals in the affine setting.

\subsection{Ordinary multiplier ideals}

To an ideal $I \subseteq \C[x_1,\dots,x_n]$, regarded as a sheaf of ideals on $X = \C^n$,
and a real parameter $t \geq 0$ one may associate
the \defining{multiplier ideal} $\multideal(I^t) \subseteq \C[x_1,\dots,x_n]$.
The multiplier ideals are defined in terms of a resolution of singularities of $I$.
For details, see~\cite{MR2132649,pag2}.

Note, in the notation $\multideal(I^t)$ the $t$ indicates dependance on the parameter $t$, rather than a power of $I$.
In particular, $\multideal(I^t)$ is defined for all real $t \geq 0$, whereas $I^t$ on its own only makes sense for integer $t \geq 0$.
See, however, Property~\ref{property: integer powers}.

Rather than present the somewhat involved definition here, we give a short list of properties of multiplier ideals
which are all that we will use.
(The reader may take these as axioms, although the properties listed here
do not characterize multiplier ideals.)

\begin{property}\label{property: dependence on arguments}
For any nonzero ideal $I$, $\multideal(I^0) = (1)$, the unit ideal.
As the parameter $t$ increases, the multiplier ideals get smaller: if $t_1 < t_2$ then $\multideal(I^{t_1}) \supseteq \multideal(I^{t_2})$.

On the other hand, if $I_1 \subseteq I_2$ then $\multideal(I_1^t) \subseteq \multideal(I_2^t)$.


Thus multiplier ideals, as functions of two arguments, are ``order-preserving'' in the ideal and ``order-reversing'' in the real parameter.
\end{property}

\begin{property}\label{property: integer powers}
For any real number $t \geq 0$ and integer $k > 0$, $\multideal(I^{kt}) = \multideal((I^k)^t)$.
\end{property}

\begin{property}
For any $t \geq 0$ and integer $p \geq 0$, $I^p \multideal(I^t) \subseteq \multideal(I^{p+t})$.
See Proposition~9.2.32(iv) of~\cite{pag2}.
\end{property}

\begin{property}\label{property: multiplier ideal of smooth}
When $\Zeros(I)$ is smooth and irreducible
with codimension $\codim(\Zeros(I)) = e = \bight(I)$,
$\multideal(I^t) = I^{\lfloor t \rfloor - e + 1}$.
In particular, $\multideal(I^t) \subseteq I$ for $t \geq e$.
More generally, if $I$ is reduced and $\Zeros(I) = V_1 \cup \dots \cup V_s$,
then restricting to a neighborhood of a general point on each $V_i$, we see $\multideal(I^t)$ vanishes on $V_i$ for $t \geq \codim V_i$,
hence $\multideal(I^t) \subseteq I$ for $t \geq \max \codim V_i = \bight(I)$.
\end{property}

The above list is a small selection of the many interesting properties of multiplier ideals.
See~\cite{MR2132649,pag2} for more, including excellent expositions of the definition
(from which all the above properties follow immediately).
Among these many other properties we single out one which we will use here,
due to Demailly--Ein--Lazarsfeld~\cite{MR1786484}.
\begin{subadditivitythm}
$\multideal(I^{t_1 + t_2}) \subseteq \multideal(I^{t_1})\multideal(I^{t_2})$.
In particular for any integer $r \geq 0$, $\multideal(I^{rt}) \subseteq \multideal(I^t)^r$.
\end{subadditivitythm}

\subsection{Asymptotic multiplier ideals}

A \defining{graded system} of ideals $\a_{\bullet} = \{ \a_n \}_{n=1}^{\infty}$
is a collection of ideals satisfying $\a_p \a_q \subseteq \a_{p+q}$,
and (to avoid trivialities) at most finitely many of the $\a_n$ may be zero.
Note that $\a_p$, $\a_{p+1}$ are not required to satisfy any particular relation,
but $(\a_p)^k \subseteq \a_{kp}$.
By convention, $\a_0 = \C[x_1,\dots,x_n]$, so that $\bigoplus_{n=0}^{\infty} \a_n$ is a $\C[x_1,\dots,x_n]$-algebra.
A trivial graded system is one of the form $\a_n = \a_1^n$.
Our main interest will be in the graded system of symbolic powers of a (reduced) ideal $I$,
$\symbi = \{ \symbolicpower{I}{n} \}_{n \geq 0}$.

To a graded system $\a_{\bullet}$ and real parameter $t \geq 0$ one can associate an \defining{asymptotic multiplier ideal}
$\multideal(\a_{\bullet}^t)$, or $\asympt{t}{I}$, defined by
\[
  \multideal(\a_{\bullet}^t) = \max_{p \geq 1} \multideal(\a_p^{t/p}) .
\]
This definition was given in~\cite{els:symbolic-powers}.
We must justify the existence and well-definedness of this maximum; we repeat the argument of~\cite{els:symbolic-powers}.
Note that since $(\a_p)^q \subseteq \a_{qp}$, by the properties of multiplier ideals we have
\[
  \multideal(\a_p^{t/p}) = \multideal((\a_p^q)^{t/pq}) \subseteq \multideal(\a_{pq}^{t/pq}) .
\]
The Noetherian property assures that among the ideals $\multideal(\a_p^{t/p})$, one is a (relative) maximum.
If $\multideal(\a_p^{t/p})$ is a maximum, then by the above, $\multideal(\a_p^{t/p}) = \multideal(\a_{pq}^{t/pq})$.
Hence if $\multideal(\a_p^{t/p})$ and $\multideal(\a_q^{t/q})$ both are maxima, then they coincide with each other.
Thus there is a unique maximum of this collection of ideals.

In particular, $\multideal(\a_{\bullet}^t) = \multideal(\a_p^{t/p})$
for $p \gg 0$ and sufficiently divisible; i.e., for all sufficiently large multiples of some $p_0$.
We say that such a $p$ \defining{computes the asymptotic multiplier ideal}.

\begin{example}\label{example: asymptotic multiplier ideal of smooth}
In the trivial case $\a_n = \a_1^n$, the asymptotic multiplier ideals reduce to the ordinary multiplier ideals:
$\multideal(\a_{\bullet}^t) = \multideal(\a_1^t)$.
This has the following consequence: If $I$ is a reduced ideal defining a smooth and irreducible variety of codimension $e$,
then
\[  \asympt{t}{I} = \multideal(I^t) = I^{\lfloor t \rfloor + 1 - e}. \]

As before, if $I$ is only reduced, then by restricting to a neighborhood of a smooth point on each
irreducible component of $\Zeros(I)$, we see that $\asympt{t}{I} \subseteq I$ for $t \geq e=\bight(I)$.
And, more generally, $\asympt{(e+k)}{I} \subseteq \symbolicpower{I}{k+1}$
for any $k \geq 0$ and any reduced ideal $I$.
\end{example}

\begin{remark}\label{rem: inclusion in asymptotic multiplier ideals}
Conversely, $\a_n \subseteq \multideal(\a_{\bullet}^n)$.
In fact, for every $n, t$, $\a_n \cdot \multideal(\a_{\bullet}^t) \subseteq \multideal(\a_{\bullet}^{t+n})$
(Theorem 11.1.19(iii) of \cite{pag2}).
This is exactly the extra piece we will add to the argument of \cite{els:symbolic-powers}
to deduce Theorem~\ref{thm:main}.
\end{remark}

As before, $\multideal(\a_{\bullet}^0) = (1)$ and if $t_1 < t_2$ then $\multideal(\a_{\bullet}^{t_1}) \supseteq \multideal(\a_{\bullet}^{t_2})$.

The asymptotic multiplier ideals satisfy subadditivity:
$\multideal(\a_{\bullet}^{t_1 + t_2}) \subseteq \multideal(\a_{\bullet}^{t_1}) \multideal(\a_{\bullet}^{t_2})$,
so $\multideal(\a_{\bullet}^{r t}) \subseteq \multideal(\a_{\bullet}^t)^r$~\cite[11.2.3]{pag2}.
This follows immediately from the subadditivity theorem for ordinary multiplier ideals.
(Let $p$ large and divisible enough compute all the asymptotic multiplier ideals appearing in the equation, then apply the ordinary
subadditivity theorem for $\a_p$.)

\subsection{Log canonical thresholds}

For an ideal $I \neq (0), (1)$, we define $\lct(I) = \sup \{ t \mid \multideal(I^t) = (1) \}$.
This is a positive rational number.
It turns out that $\multideal(I^{\lct(I)}) \neq (1)$.
(See \cite{MR2068967} or \cite{pag2}.)

Let $I$ be a radical ideal and let $e'$ be the minimum of the codimensions of the irreducible components of $\Zeros(I)$.
Then $\lct(I)$ satisfies
\[
  0 < \lct(I) \leq e' .
\]
(Restricting to a neighborhood of a general point on a codimension $e'$ component of $\Zeros(I)$,
$\multideal(I^{e'})$ vanishes on the component by Property~\ref{property: multiplier ideal of smooth}.)

For a graded system of ideals $\a_{\bullet}$, we define $\lct(\a_{\bullet}) = \sup \{ t \mid \multideal(\a_{\bullet}^t) = (1) \}$.
This may be infinite or irrational.
However for the graded system of symbolic powers of a radical ideal $I$,
we have $\lct(\symbi) \leq e'$ as above.

As shown in \cite[Remark 3.3]{MR1936888},
\[  \lct(\a_{\bullet}) = \sup p \lct(\a_p) = \lim p \lct(\a_p) . \]
Taking $p=1$, this shows $\lct(\symbi) \geq \lct(I)$ for a radical ideal $I$.

\section{Examples}\label{sect: examples}

In this section we give the asymptotic multiplier ideals of graded systems of monomial ideals,
especially for the symbolic powers of a radical (i.e., squarefree) monomial ideal,
and the asymptotic multiplier ideals of graded systems of divisor and hyperplane arrangements.

\subsection{Monomial ideals}

The following theorem gives the ordinary multiplier ideals of a monomial ideal.
\begin{thm}[\cite{howald:monomial}]
Let $I$ be a monomial ideal with Newton polyhedron $N=\Newt(I)$.
Then $\multideal(I^t)$ is the monomial ideal containing $x^v$ if and only if $v+(1,\dots,1) \in \Int(t \cdot N)$.
\end{thm}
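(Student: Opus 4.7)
The plan is to reduce the statement to a combinatorial calculation on a toric log resolution of $I$, and then to match the floor-function conditions coming from the definition of the multiplier ideal with the strict inequalities that define the topological interior of $tN$.

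First, I would observe that $\multideal(I^t)$ is itself a monomial ideal. The torus $T=(\C^*)^n$ acts on $\C^n$ preserving the monomial ideal $I$, and any $T$-equivariant log resolution exhibits $\multideal(I^t)$ as the pushforward of a $T$-invariant sheaf. It therefore suffices to determine which monomials $x^v$ lie in $\multideal(I^t)$. Next, I would construct such a resolution toric-combinatorially: take a smooth refinement $\Sigma'$ of the fan of $\A^n$ on which the support function $\varphi_N(u)=\min_{w\in N}\langle u,w\rangle$ of $N$ is linear. The resulting toric morphism $\pi\colon X'\to\A^n$ is a log resolution of $I$, and every prime divisor on $X'$ is an invariant divisor $E_\rho$ indexed by a ray $\rho\in\Sigma'(1)$ with primitive generator $v_\rho\in\Z_{\geq 0}^n$.

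The heart of the proof is then the standard table of toric data on each $E_\rho$:
\[
\ord_{E_\rho}(\pi^*x^v)=\langle v_\rho,v\rangle,\qquad
\ord_{E_\rho}(\pi^{-1}I)=\varphi_N(v_\rho),
\]
with $\ord_{E_\rho}(K_{X'/\A^n})=\langle v_\rho,\boldone\rangle-1$ for the new (exceptional) rays and $0$ for the original coordinate rays $e_i$. Feeding these into the defining formula $\multideal(I^t)=\pi_*\O_{X'}(K_{X'/\A^n}-\lfloor t\pi^{-1}I\rfloor)$ gives that $x^v\in\multideal(I^t)$ if and only if for every ray $\rho$ of $\Sigma'$,
\[
\langle v_\rho,v+\boldone\rangle-1\ \geq\ \lfloor t\,\varphi_N(v_\rho)\rfloor,
\]
uniformly for exceptional and non-exceptional rays once one observes that, for $v_\rho=e_i$, the naive condition $v_i\geq\lfloor t\varphi_N(e_i)\rfloor$ is equivalent to the same strict inequality one gets from the interior condition. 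Since the left-hand side is an integer, each such inequality is equivalent to the strict inequality $\langle v_\rho,v+\boldone\rangle>t\,\varphi_N(v_\rho)$.

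To finish, I would invoke the presentation $tN=\{u:\langle v_\rho,u\rangle\geq t\,\varphi_N(v_\rho)\text{ for every facet normal }v_\rho\}$: the strict inequalities above, ranging over all facet normals of $N$ (and hence over all rays of $\Sigma'$, since non-facet rays yield redundant conditions), are by definition the condition $v+\boldone\in\Int(tN)$. The main obstacle I foresee is not any single calculation but the bookkeeping: verifying that the original coordinate rays, the new rays that correspond to bounded facets of $N$, and the new rays that do not correspond to any facet all fit cleanly into the interior description, and that the passage from ``$\lfloor\cdot\rfloor$'' to strict inequality is honest in every case. This is where one uses that $\langle v_\rho,v+\boldone\rangle$ is always an integer.
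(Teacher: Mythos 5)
The paper does not prove this statement---it is quoted from Howald's paper \cite{howald:monomial}---and your argument is in fact Howald's original proof: pass to a torus-equivariant log resolution coming from a smooth refinement of the normal fan of $N$, read off discrepancies $\langle v_\rho,\boldone\rangle-1$ and vanishing orders $\varphi_N(v_\rho)$ on the invariant divisors, and use integrality of $\langle v_\rho,v+\boldone\rangle$ to convert the floor conditions into the strict inequalities defining $\Int(tN)$. The argument is correct; the only phrasing to tighten is that not \emph{every} prime divisor of $X'$ is torus-invariant---rather, only the invariant divisors carry nonzero coefficients in $K_{X'/\A^n}-\lfloor t\,F\rfloor$, which is why the membership test reduces to the rays of $\Sigma'$.
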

Here $\Int( \, )$ denotes topological interior.
In particular, $\lct(I)=1/t$ where $t \cdot (1,\dots,1)$ is in the boundary of $\Newt(I)$.

Let $I_{\bullet} = \{I_p\}$ be a graded system of monomial ideals.
Let $N_p = \Newt(I_p)$.
Then $I_p^k \subseteq I_{pk}$, so $k \cdot N_p \subseteq N_{pk}$, which means $\frac{1}{p} N_p \subseteq \frac{1}{pk} N_{pk}$.
Let $N(I_{\bullet}) = \bigcup \frac{1}{p} N_p$.
Since this is an ascending union of convex sets, it is convex.
\begin{thm}[\cite{MR1936888}]\label{thm: asymptotic multiplier ideal of monomial graded system}
$\multideal( I_{\bullet}^t )$ is the monomial ideal containing $x^v$ if and only if $v+(1,\dots,1) \in \Int(t \cdot N(I_{\bullet}))$.
\end{thm}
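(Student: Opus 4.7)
The plan is to translate the statement, via Howald's theorem, into a convex-geometric statement about the ascending union $N(I_\bullet) = \bigcup_p \frac{1}{p} N_p$, and then verify that ``interior of the union'' equals ``union of interiors'' for such an ascending union of convex sets.

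First I would reduce to Howald. By the definition of the asymptotic multiplier ideal, there exists $p_0 \geq 1$ such that $\multideal(I_\bullet^t) = \multideal(I_p^{t/p})$ for every $p$ that is a sufficiently large multiple of $p_0$. By Howald's theorem, $x^v \in \multideal(I_p^{t/p})$ if and only if $v + \mathbf{1} \in \Int\bigl((t/p) \cdot N_p\bigr) = \Int\bigl(t \cdot \tfrac{1}{p} N_p\bigr)$. Combined with the containments $\tfrac{1}{p} N_p \subseteq \tfrac{1}{pk} N_{pk}$ (which force $\Int(t \cdot \tfrac{1}{p} N_p) \subseteq \Int(t \cdot \tfrac{1}{pk} N_{pk})$), this shows
\[
x^v \in \multideal(I_\bullet^t) \iff v + \mathbf{1} \in \Int\!\bigl(t \cdot \tfrac{1}{p} N_p\bigr) \text{ for some } p \geq 1.
\]

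Second, I would prove the convex-geometric claim: $\Int(t \cdot N(I_\bullet)) = \bigcup_p \Int(t \cdot \tfrac{1}{p} N_p)$. The inclusion $\supseteq$ is automatic since each $\tfrac{1}{p} N_p$ is contained in $N(I_\bullet)$, and interiors respect inclusions of subsets of a common ambient space. For $\subseteq$, suppose $w = v + \mathbf{1} \in \Int(t \cdot N(I_\bullet))$, so some ball $B(w,\varepsilon)$ is contained in $t \cdot N(I_\bullet) = \bigcup_p \tfrac{t}{p} N_p$. Pick the $2n$ points $w \pm \tfrac{\varepsilon}{2} e_i$; each lies in some $\tfrac{t}{p_i} N_{p_i}$, and letting $P$ be any common multiple of the finitely many $p_i$, by the ascending property all $2n$ points lie in $\tfrac{t}{P} N_P$. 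Their convex hull is a cross-polytope with $w$ in its interior, and by convexity of $\tfrac{t}{P} N_P$ it too lies in $\tfrac{t}{P} N_P$, so $w \in \Int(t \cdot \tfrac{1}{P} N_P)$. Combining this with the first step finishes the proof.

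The main obstacle is the convex-geometric lemma, since interiors of ascending unions of convex sets do not behave well in full generality; the finite-dimensional cross-polytope trick, together with the fact that the $\tfrac{1}{p} N_p$ form an ascending chain indexed by a directed set (divisibility) of positive integers, is what makes it go through here.
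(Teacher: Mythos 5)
Your proposal is correct and follows essentially the same route as the paper: reduce to Howald's theorem via a $p$ computing the asymptotic multiplier ideal, then identify $\Int(t\cdot N(I_\bullet))$ with $\bigcup_p \Int\bigl(t\cdot\tfrac{1}{p}N_p\bigr)$. The only difference is that the paper asserts the nontrivial inclusion of that convex-geometric identity without comment, whereas you justify it with the cross-polytope argument and the directedness of the family $\{\tfrac{1}{p}N_p\}$ under divisibility --- a worthwhile detail, correctly executed.
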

\begin{proof}
If $p$ computes $\multideal(I_{\bullet}^t)$ and $x^v \in \multideal(I_{\bullet}^t) = \multideal(I_p^{t/p})$
then $v+(1,\dots,1) \in \Int( \frac{t}{p} N_p) \subseteq \Int(t \cdot N(I_{\bullet}))$.
Conversely if $v+(1,\dots,1) \in \Int(t \cdot N(I_{\bullet}))$ then $v+(1,\dots,1) \in \Int(\frac{t}{p} N_p)$ for some $p$,
whence $x^v \in \multideal(I_p^{t/p}) \subseteq \multideal(I_{\bullet}^t)$.
\end{proof}

For a graded system of monomial ideals, $\lct(I_{\bullet}) = 1/t$ where $t \cdot (1,\dots,1)$ is in the boundary of $N(I_\bullet)$.

More can be said in a special situation:

\begin{prop}\label{prop: Newton polyhedron of special monomial graded system}
If a graded system is given by $I_p = C_1^p \cap \dots \cap C_r^p$ for fixed monomial ideals $C_1, \dots, C_r$,
then in the above notation, $N(I_{\bullet}) = \bigcap \Newt(C_i)$.
\end{prop}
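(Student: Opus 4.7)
The plan is to reduce everything to two standard facts about Newton polyhedra of monomial ideals and then observe that the scaling factors cancel so nicely that $\frac{1}{p} N_p$ is already independent of $p$.

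First I would record the two ingredients. For any monomial ideals $J_1, J_2$, one has $\Newt(J_1 \cap J_2) = \Newt(J_1) \cap \Newt(J_2)$: since $J_1 \cap J_2$ is the monomial ideal whose monomials are exactly those lying in both $J_1$ and $J_2$, the claim follows by taking the closed convex hull plus $\R_{\geq 0}^n$ on both sides. Inductively, $\Newt(C_1^p \cap \dots \cap C_r^p) = \bigcap_i \Newt(C_i^p)$. Second, for a monomial ideal $J$ with generating exponents $v_1, \dots, v_s$, I claim $\Newt(J^p) = p \cdot \Newt(J)$. The inclusion $\Newt(J^p) \subseteq p\cdot \Newt(J)$ follows because every generator of $J^p$ has exponent $v_{i_1} + \cdots + v_{i_p} = p \cdot \bigl(\tfrac{1}{p}\sum v_{i_j}\bigr) \in p \cdot \conv(v_i) \subseteq p\cdot \Newt(J)$, and $p\cdot \Newt(J)$ is closed under translation by $\R_{\geq 0}^n$ since $p\cdot \R_{\geq 0}^n = \R_{\geq 0}^n$. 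The reverse inclusion holds because each $p v_i$ is the exponent of $x^{p v_i} \in J^p$, so $\conv(p v_i) + \R_{\geq 0}^n \subseteq \Newt(J^p)$.

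Combining these, and using that scaling by $p$ commutes with intersection,
\[
  N_p = \Newt(I_p) = \bigcap_i \Newt(C_i^p) = \bigcap_i p\cdot \Newt(C_i) = p \cdot \bigcap_i \Newt(C_i).
\]
Dividing by $p$ yields $\frac{1}{p} N_p = \bigcap_i \Newt(C_i)$, which is independent of $p$. Therefore the ascending union $N(I_\bullet) = \bigcup_p \frac{1}{p} N_p$ is just the single set $\bigcap_i \Newt(C_i)$, as claimed.

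There is no real obstacle here; the statement is essentially the compatibility of Newton polyhedra with the two operations (intersection and powering) used to define $I_p$. The only point worth being careful about is that $\Newt(J^p) = p \cdot \Newt(J)$ rather than merely $\Newt(J^p) \supseteq p \cdot \Newt(J)$, since one might worry that $J^p$ has extra monomials from mixing different generators; the observation above shows those extra exponents are already in $p\cdot \Newt(J)$ via convex combinations, so nothing is gained.
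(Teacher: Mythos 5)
Your second ingredient, $\Newt(J^p) = p \cdot \Newt(J)$, is correct, but the first one is not: for monomial ideals one only has the inclusion $\Newt(J_1 \cap J_2) \subseteq \Newt(J_1) \cap \Newt(J_2)$, because the convex hull of an intersection of sets is in general strictly smaller than the intersection of the convex hulls. Concretely, take $J_1 = (x^2,y)$ and $J_2 = (x,y^2)$ in $\C[x,y]$. Then $J_1 \cap J_2 = (x,y)^2$, so $\Newt(J_1 \cap J_2) = \{(a,b) \in \R_{\geq 0}^2 : a+b \geq 2\}$, whereas $\Newt(J_1) \cap \Newt(J_2) = \{a+2b \geq 2\} \cap \{2a+b \geq 2\}$ contains the vertex $(2/3,2/3)$, which fails $a+b \geq 2$. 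Consequently your conclusion that $\frac{1}{p}N_p$ is independent of $p$ is also false: in this example $(2/3,2/3) \notin N_1$, but $x^2y^2 = (x^2)(y)(y) = (x)(x)(y^2) \in J_1^3 \cap J_2^3$, so $(2/3,2/3) \in \frac{1}{3}N_3$. The ascending union over $p$ is genuinely needed, and your argument as written only establishes the easy inclusion $N_p \subseteq \bigcap \Newt(C_i^p) = p\cdot\bigcap\Newt(C_i)$, i.e., $N(I_\bullet) \subseteq \bigcap \Newt(C_i)$.

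The missing content is the reverse inclusion, and this is where the actual work lies. The paper's route: $\bigcap \Newt(C_i)$ is a rational polyhedron, so for $p$ sufficiently divisible the dilate $p \cdot \bigcap \Newt(C_i)$ is a lattice polyhedron, equal to the convex hull of the lattice points it contains; one then argues that such lattice points $v$ give monomials $x^v$ lying in every $C_i^p$, hence in $I_p$, so that $p \cdot \bigcap \Newt(C_i) \subseteq \Newt(I_p)$. (Equivalently: each vertex $w$ of $\bigcap \Newt(C_i)$ is a rational convex combination of generators of $C_i$ plus a nonnegative rational vector, for each $i$; clearing all denominators by a suitable $p$ exhibits $x^{pw}$ as a product of $p$ generators of $C_i$ times a monomial, for every $i$ simultaneously.) Some argument of this kind, in which the divisibility of $p$ plays an essential role, is needed to close the gap.
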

\begin{proof}
For a monomial ideal $\a$, let $\monom(\a)$ denote the set of exponent vectors of monomials in $\a$,
so that $\Newt(\a)$ is the convex hull $\conv(\monom(\a))$.
For $p \geq 1$ we have $\monom(I_p) = \bigcap \monom(C_i^p)$,
so
\[  \Newt(I_p) \subseteq \bigcap \Newt(C_i^p) = p \cdot \bigcap \Newt(C_i) .  \]
This shows $N(I_{\bullet}) \subseteq \bigcap \Newt(C_i)$.

For the reverse inclusion, note $\bigcap \Newt(C_i)$ is a rational polyhedron.
For $p$ sufficiently divisible, $p \cdot \bigcap \Newt(C_i)$ is a lattice polyhedron; in particular all its
extremal points (vertices) have integer coordinates, and $p \cdot \bigcap \Newt(C_i)$ is the convex hull
of the integer (lattice) points it contains.
So let $v$ be an integer point in $p \cdot \bigcap \Newt(C_i) = \bigcap \Newt(C_i^p)$.
Then $x^v \in C_i^p$ for each $i$, so $x^v \in \bigcap C_i^p = I_p$.
This shows $p \cdot \bigcap \Newt(C_i) \subseteq \conv(\monom(I_p))$.
Therefore $\bigcap \Newt(C_i) \subseteq \frac{1}{p} \Newt(I_p) \subseteq N(I_{\bullet})$.
\end{proof}
One can check that in the situation of the above proposition, $\lct(I_{\bullet}) = \min \lct(C_i)$.

\begin{prop}\label{prop: asymptotic multiplier ideal of symbolic powers of monomial}
Let $I=I_1$ be a reduced monomial ideal and $I_p = \symbolicpower{I}{p}$.
Suppose $I$ is not the maximal ideal.
Let $N$ be the convex region defined by the linear inequalities that correspond to unbounded facets of $\Newt(I)$.
Then $N=N(\symbolicpower{I}{\bullet})$; in particular $\multideal(t \cdot \symbolicpower{I}{\bullet})$ is the monomial ideal
containing $x^v$ if and only if $v+(1,\dots,1) \in \Int(t \cdot N)$.
\end{prop}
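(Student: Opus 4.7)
The plan is to reduce $N(\symbolicpower{I}{\bullet})$ to an intersection of Newton polyhedra associated with the minimal primes of $I$, via Proposition~\ref{prop: Newton polyhedron of special monomial graded system}, and then to match that intersection with $N$.

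Because $I$ is a reduced monomial ideal distinct from the maximal ideal, its minimal primes are monomial primes $P_i=(x_j:j\in S_i)$ with $S_i\subsetneq\{1,\dots,n\}$, and $I=P_1\cap\cdots\cap P_r$. Each $P_i$ is generated by a regular sequence of variables, so $\symbolicpower{P_i}{p}=P_i^p$ and consequently $\symbolicpower{I}{p}=\bigcap_i P_i^p$. Applying Proposition~\ref{prop: Newton polyhedron of special monomial graded system} with $C_i=P_i$ then yields
\[
N(\symbolicpower{I}{\bullet})=\bigcap_{i=1}^{r}\Newt(P_i)=\Bigl\{v\in\R^n_{\geq 0}:\sum_{j\in S_i}v_j\geq 1\ \text{for all }i=1,\dots,r\Bigr\},
\]
using that $\Newt(P_i)=\{v\in\R^n_{\geq 0}:\sum_{j\in S_i}v_j\geq 1\}$.

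It remains to identify this set with $N$, which amounts to comparing the defining inequalities of $\bigcap_i\Newt(P_i)$ with the unbounded facet inequalities of $\Newt(I)$. In one direction, each inequality $\sum_{j\in S_i}v_j\geq 1$ holds on $\Newt(I)$ (since $I\subseteq P_i$) and has normal supported on $S_i\subsetneq\{1,\dots,n\}$, so it supports an unbounded face; using the incomparability of the minimal primes, one can produce, for each $j\in S_i$, a squarefree generator $u^{(j)}\in I$ whose $S_i$-projection is the basis vector $e_j$, and these witnesses show that the face has codimension one and is therefore a facet. In the other direction, suppose $\langle a,v\rangle\geq c$ defines an unbounded facet with $a\in\R^n_{\geq 0}$ supported on $T\subsetneq\{1,\dots,n\}$. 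If $c=0$, the inequality is a coordinate inequality $v_j\geq 0$, already built into $\R^n_{\geq 0}$. If $c>0$, the squarefreeness of the generators forces every generator to have some $u_j=1$ with $j\in T$, so $I\subseteq(x_j:j\in T)$, and prime avoidance gives $T\supseteq S_i$ for some minimal prime $P_i$; a dimension count—comparing the faces cut out by weights $a$ with non-uniform values on $S_i$ or with support strictly larger than $S_i$—shows these alternatives produce strictly smaller faces, so to be a genuine facet the inequality must reduce, up to a positive scalar, to $\sum_{j\in S_i}v_j\geq 1$.

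The main obstacle is this last combinatorial step: ruling out unbounded facets whose normal has support strictly containing some $S_i$ or whose weights on $S_i$ are not uniform. The argument must exploit the rigid squarefree structure of the generators of a reduced monomial ideal, showing that unbalancing weights on $S_i$ or adding support outside $S_i$ drives the witness generators $u^{(j)}$ off the supporting hyperplane, collapsing the face to lower dimension. Once $N=N(\symbolicpower{I}{\bullet})$ is established, the description of $\multideal(t\cdot\symbolicpower{I}{\bullet})$ follows immediately from Theorem~\ref{thm: asymptotic multiplier ideal of monomial graded system}.
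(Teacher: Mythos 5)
Your proof follows the same route as the paper: decompose $I$ into its minimal monomial primes, use $\symbolicpower{I}{p}=\bigcap_i P_i^p$ together with Proposition~\ref{prop: Newton polyhedron of special monomial graded system} to get $N(\symbolicpower{I}{\bullet})=\bigcap_i\Newt(P_i)$, and then identify this intersection with the region cut out by the unbounded facets of $\Newt(I)$. The combinatorial step you flag as the ``main obstacle'' is precisely the step the paper asserts without proof (that the $\Newt(C_i)$ together with the facets of the positive orthant are exactly the unbounded facets of $\Newt(I)$), so your attempt actually supplies more justification than the paper's own argument; the approach is correct.
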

\begin{proof}
Let $I = C_1 \cap \dots \cap C_r$, the $C_i$ minimal primes of $I$.
Then $\symbolicpower{I}{p} = C_1^p \cap \dots \cap C_r^p$.
As long as $I$ is non-maximal, equivalently each $C_i$ is non-maximal,
the $\Newt(C_i)$, together with the facets of the positive orthant,
correspond precisely to the unbounded facets of $\Newt(I)$.
The result follows by the previous propositions.
\end{proof}
In particular, each $\lct(C_i) = \height C_i$,
so $\lct(\symbolicpower{I}{\bullet}) = \min \height C_i = e'$,
where $e'$ is the minimum codimension of any irreducible component of the variety $V(I)$.

\subsection{Hyperplane arrangements}

Let $D$ be a divisor with real (or rational or integer) coefficients.
The multiplier ideals $\multideal(t \cdot D)$ are defined similarly to the multiplier ideals of ideals.
All the properties described above hold for multiplier ideals of divisors.
In fact, when $D$ is a divisor with integer coefficients with defining ideal $I$, $\multideal(t \cdot D) = \multideal(I^t)$.
See \cite{pag2} for details.

The multiplier ideals of hyperplane arrangements were computed in~\cite{mustata:hyperplane-arrangements},
with the following result.
\begin{thm}
Let $D = b_1 H_1 + \dots + b_r H_r$ be a weighted central arrangement, where the $H_i$ are hyperplanes in $\C^n$
containing the origin and the $b_i$ are nonnegative real numbers, the weights.
Let $L(D)$ be the intersection lattice of the arrangement $D$, the set of proper subspaces of $\C^n$
which are intersections of the $H_i$.
For $W \in L(D)$, let $r(W) = \codim(W)$ and $s(W) = \sum\{ b_i \mid W \subseteq H_i \} = \ord_W(D)$.
Then the multiplier ideals of $D$ are given by
\[
  \multideal(t \cdot D) = \bigcap_{W \in L(D)} I_W^{\lfloor t \cdot s(W) \rfloor + 1 - r(W)} ,
\]
where $I_W$ is the ideal of $W$.
\end{thm}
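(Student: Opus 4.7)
The plan is to produce an explicit log resolution of $D$ and then read off the multiplier ideal directly from its definition. I would construct $\pi \colon Y \to X = \C^n$ by successively blowing up (the strict transforms of) the flats $W \in L(D)$ in order of increasing dimension, starting with the smallest. Each center is a smooth linear subspace or its strict transform, and a transversality check shows that the total transform of $\sum H_i$ has simple normal crossings support, so $\pi$ is a log resolution of $D$. The exceptional divisors of $\pi$ are naturally indexed by $L(D)$: one divisor $E_W$ for each proper flat, along with the strict transforms $\tilde H_i$ of the original hyperplanes.

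Next I would compute the two pieces of numerical data needed. Since blowing up a smooth codimension-$r(W)$ center contributes discrepancy $r(W) - 1$ along the new exceptional divisor, and later centers meet $E_W$ only in codimension $\geq 1$, the discrepancy of $E_W$ in $K_{Y/X}$ is $r(W) - 1$. On the other hand, each hyperplane satisfies $\pi^* H_i = \tilde H_i + \sum_{W \subseteq H_i} E_W$, and summing with weights yields
\[
  \pi^* D \;=\; \sum_i b_i \tilde H_i \;+\; \sum_{W \in L(D)} s(W)\, E_W .
\]
By the definition $\multideal(t \cdot D) = \pi_* \O_Y(K_{Y/X} - \lfloor t \pi^* D \rfloor)$, a function $f$ lies in $\multideal(t \cdot D)$ iff $\ord_{E_W}(\pi^* f) \geq \lfloor t\, s(W) \rfloor - r(W) + 1$ for every $W \in L(D)$ and $\ord_{\tilde H_i}(\pi^* f) \geq \lfloor t b_i \rfloor$ for every $i$ (the latter being subsumed by the case $W = H_i$).

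The final step, which I expect to be the main obstacle, is identifying the divisorial valuation $\ord_{E_W}$ on $\C(X)$ with the $I_W$-adic order $\max\{m : f \in I_W^m\}$. Immediately after $W$ is blown up, the fresh exceptional divisor plainly computes this $I_W$-adic order; the content is to verify that the \emph{subsequent} blowups of smaller flats through $W$ do not perturb the valuation attached to the strict transform of $E_W$. Because each later center is the strict transform of a proper subflat and meets $E_W$ transversely, standard valuation-theoretic computations on smooth blowups yield the stability. Granting this, the pointwise condition $\ord_{E_W}(\pi^* f) \geq \lfloor t\, s(W) \rfloor - r(W) + 1$ translates to $f \in I_W^{\lfloor t\, s(W) \rfloor + 1 - r(W)}$ (with the convention that non-positive exponents give the unit ideal), and intersecting over $W \in L(D)$ produces the stated formula.
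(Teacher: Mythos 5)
The paper itself offers no proof of this statement: it is quoted from Musta\c{t}\u{a}'s computation of multiplier ideals of hyperplane arrangements, so there is no internal argument to compare against. Your sketch is, in substance, that original proof. The iterated blowup of the strict transforms of the flats in order of increasing dimension is the (maximal) De Concini--Procesi wonderful model; the identification of $\ord_{E_W}$ with the $I_W$-adic order, the coefficient $s(W)$ of $E_W$ in $\pi^*D$, the discrepancy $r(W)-1$, and the final translation of $\ord_{E_W}(\pi^*f)\ge \lfloor t\, s(W)\rfloor - r(W)+1$ into $f\in I_W^{\lfloor t\, s(W)\rfloor +1-r(W)}$ (valid because $W$ is a smooth linear subspace, so $\pi_*\O(-mE_W)=I_W^m$) are all exactly as in the source. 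Your observation that the $\tilde H_i$ conditions are absorbed by the $W=H_i$ terms of the intersection is also correct.

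Two remarks on where the weight of the argument actually sits. The step you flag as ``the main obstacle''---that later blowups of smaller flats do not perturb the valuation attached to the strict transform of $E_W$---is automatic and needs no computation: a divisorial valuation is determined by the generic point of the divisor on any birational model, and strict transforms preserve generic points; for the same reason the coefficient of a strict transform in $K_{Y/X}$ is unchanged by further blowups away from its generic point. The genuinely nontrivial ingredient is the one you dispatch with ``a transversality check shows'': that the total transform of $\bigcup H_i$ on the wonderful model really has simple normal crossings support. This is a theorem of De Concini--Procesi (proved by induction using the local product structure of an arrangement along a flat), and it is precisely what licenses the formula $\multideal(t\cdot D)=\pi_*\O_Y(K_{Y/X}-\lfloor t\,\pi^*D\rfloor)$ you then read off; without it the pushforward can be strictly larger than the multiplier ideal. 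A complete write-up must either prove or cite that fact. Modulo that citation, your proof is correct and coincides with the standard one.
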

In fact, the intersection over $W \in L(D)$ can be reduced to an intersection over $W \in \mathcal{G}$
for certain subsets $\mathcal{G} \subseteq L(D)$ called building sets; see~\cite{teitler:hyperplane-arrangements}.
The log canonical threshold is given by $\lct(D) = \min_{W \in L(D)} r(W)/s(W)$; this may be reduced to a minimum
over $W \in \mathcal{G}$.

With this in hand it is easy to describe a similar result for graded systems of hyperplane arrangements.

We will say a \defining{graded system of divisors} is a sequence $D_{\bullet} = \{ D_p \}_{p \geq 1}$
such that $D_p + D_q \geq D_{p+q}$.
Equivalently, for each component $E$, the $\ord_E(D_p)$ satisfy
$\ord_E(D_p)+\ord_E(D_q) \geq \ord_E(D_{p+q})$.
If the $D_p$ have integer weights, then the condition of the $D_p$ forming a graded system of divisors
is equivalent to requiring the ideals $I_p = I(D_p)$ to form a graded system of ideals.
Define the asymptotic multiplier ideal $\multideal(t \cdot D_{\bullet}) = \max_p \multideal(\frac{t}{p} D_p)$,
as for graded systems of ideals.

The following lemma will be helpful:

\begin{lem}[\cite{MR1936888}, Lemma 1.4]
Let $\{a_p\}$ be a sequence of non-negative real numbers such that $a_p + a_q \geq a_{p+q}$ for all $p,q$.
Then $\frac{1}{p} a_p$ converges to a finite limit; in fact $\frac{1}{p} a_p \to \inf \frac{1}{p} a_p$.
\end{lem}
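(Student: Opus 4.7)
The plan is to recognize this as the classical Fekete lemma for subadditive sequences and execute the standard division-algorithm argument. Set $L = \inf_{p \geq 1} \frac{a_p}{p}$. Since every $a_p \geq 0$, the infimum $L$ is a well-defined non-negative real number (in particular finite), and from its definition one has $\liminf_{p \to \infty} \frac{a_p}{p} \geq L$ immediately. It therefore suffices to establish $\limsup_{p \to \infty} \frac{a_p}{p} \leq L$.

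For this, fix $\epsilon > 0$ and, by the definition of infimum, pick $p_0 \geq 1$ with $\frac{a_{p_0}}{p_0} < L + \epsilon$. For any integer $n \geq p_0$, write $n = q p_0 + r$ with $q \geq 1$ and $0 \leq r < p_0$ via the division algorithm. Iterating the subadditivity hypothesis gives $a_{q p_0} \leq q \, a_{p_0}$, and one further application (when $r \geq 1$) yields $a_n \leq q \, a_{p_0} + a_r$; for $r = 0$ we simply have $a_n \leq q \, a_{p_0}$, so the $a_r$ term can be suppressed and no $a_0$ ever appears. Setting $M = \max_{1 \leq r < p_0} a_r$, which is a finite non-negative number, we obtain
\[
  \frac{a_n}{n} \;\leq\; \frac{q \, a_{p_0}}{n} + \frac{M}{n} \;=\; \frac{q p_0}{n} \cdot \frac{a_{p_0}}{p_0} + \frac{M}{n}.
\]
As $n \to \infty$ with $p_0$ fixed, $q p_0 / n \to 1$ and $M/n \to 0$, so $\limsup_{n \to \infty} \frac{a_n}{n} \leq \frac{a_{p_0}}{p_0} < L + \epsilon$. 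Since $\epsilon > 0$ was arbitrary, $\limsup_{n \to \infty} \frac{a_n}{n} \leq L$, and combined with the earlier $\liminf$ bound this gives $\lim_{n \to \infty} \frac{a_n}{n} = L = \inf_p \frac{a_p}{p}$.

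There is no real obstacle here: this is the classical Fekete lemma, and the only points requiring any care are the handling of the $r=0$ case in the division-algorithm decomposition (so the sequence, which is indexed from $p=1$, is never evaluated at $0$) and the observation that $L$ is finite, which is immediate from non-negativity. Everything else is a routine $\epsilon$-and-$p_0$ chase.
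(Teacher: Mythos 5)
Your proof is correct: it is the standard Fekete subadditivity argument (division algorithm, iterate $a_{qp_0}\leq q\,a_{p_0}$, absorb the bounded remainder term), and the only delicate points --- avoiding an evaluation at index $0$ and the finiteness of the infimum --- are handled properly. The paper itself offers no proof of this lemma (it is simply quoted from the cited reference), so there is nothing to compare against; your argument fills that in completely. The single cosmetic nit is that when $p_0=1$ your $M=\max_{1\leq r<p_0}a_r$ is a maximum over the empty set, but in that case $r=0$ always and the term never arises, so setting $M=0$ there resolves it.
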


For a graded system $D_{\bullet}$ of divisors, let $D_{\infty} = \sum a_E E$
where $a_E = \lim_{p \to \infty} \frac{1}{p} \ord_E(D_p)$.

\begin{prop}
Let $D_{\bullet}$ be a graded system of divisors.
Then $\multideal(t \cdot D_{\bullet}) = \multideal(t \cdot D_{\infty})$.
\end{prop}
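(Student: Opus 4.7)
The plan is to prove the two inclusions separately.

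The inclusion $\multideal(t \cdot D_\bullet) \subseteq \multideal(t \cdot D_\infty)$ is the easy half. The preceding lemma gives $a_E = \inf_p \frac{1}{p}\ord_E(D_p)$, so $a_E \leq \frac{1}{p}\ord_E(D_p)$ for every $p$ and every prime divisor $E$; hence $D_\infty \leq \frac{1}{p}D_p$ as $\R$-divisors for every $p$. Since the multiplier ideal is order-reversing in the divisor argument (enlarging $D$ shrinks $\multideal(t\cdot D)$), this immediately yields $\multideal\bigl(\tfrac{t}{p}D_p\bigr) \subseteq \multideal(t \cdot D_\infty)$ for every $p$, and taking the maximum over $p$ gives the inclusion.

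For the reverse inclusion, my plan is to exhibit a single $p$ that makes $\multideal(\tfrac{t}{p}D_p)$ coincide with $\multideal(t\cdot D_\infty)$ on a common log resolution. Fix some $p_0$ that computes the asymptotic multiplier ideal. The graded-system inequality $D_{p_0} + D_{(m-1)p_0} \geq D_{mp_0}$ inductively forces every component of $D_{mp_0}$ to be a component of $D_{p_0}$, and $a_E>0$ forces $\ord_E(D_p)>0$ for all $p$, so every component of $D_\infty$ is a component of $D_{p_0}$ as well. Consequently, one fixed log resolution $\mu: Y \to X$ of $D_{p_0}$ simultaneously resolves all of the $D_{mp_0}$ and $D_\infty$, and only finitely many prime divisors $E$ on $Y$ enter the picture (strict transforms of components of $D_{p_0}$ together with the exceptional divisors).

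On this fixed resolution, the log-resolution formula reduces the reverse inclusion to the coefficientwise identity $\lfloor \tfrac{t}{mp_0}\ord_E(D_{mp_0})\rfloor = \lfloor t \cdot a_E\rfloor$ for every relevant $E$ and some large $m$. Here the lemma provides convergence $\tfrac{1}{mp_0}\ord_E(D_{mp_0}) \to a_E$ while always remaining $\geq a_E$, so for $m$ large enough $\tfrac{t}{mp_0}\ord_E(D_{mp_0})$ lands in the half-open interval $[\lfloor t a_E\rfloor,\lfloor t a_E\rfloor + 1)$; this is automatic when $t a_E \notin \Z$ and follows from close enough convergence when $t a_E \in \Z$ (including the trivial case $a_E = 0$). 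Finitely many $E$ can be handled with a single large $m$, chosen divisible enough that $mp_0$ still computes the asymptotic. Combining with the first inclusion closes the argument. The main difficulty I foresee is exactly the bookkeeping that produces one log resolution valid for all the divisors at once: once the support-containment observation is in hand, the rest is elementary convergence of real numbers.
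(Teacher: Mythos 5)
Your proof is correct and is essentially an expansion of the paper's one-line argument, which likewise passes to a common log resolution of all the $D_p$ and $D_\infty$ (possible because, as you observe, all the relevant supports lie in $\mathrm{Supp}(D_{p_0})$) and uses that the normalized coefficients converge to those of $D_\infty$ from above, so the rounded-down pullbacks eventually agree. The one notational slip is that the floor comparison must be carried out for the coefficients $\ord_E(\mu^* D_{mp_0})$ of the pullbacks along prime divisors $E$ on the resolution $Y$, not for $\ord_E(D_{mp_0})$ on $X$; since these are fixed nonnegative linear combinations of the latter, they too converge to $\ord_E(\mu^* D_\infty)$ from above, and your convergence argument goes through verbatim.
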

This follows from considering a common resolution of singularities of all the $D_p$ and $D_{\infty}$.
The following is an immediate consequence.
\begin{prop}
Let $D_{\bullet}$ be a graded system of divisors where each $D_p$ is a central hyperplane arrangement.
Let the hyperplanes be $H_1, \dots, H_r$.
Let $D_p = b_{1,p} H_1 + \dots + b_{r,p} H_r$, and let $b_{i,\infty} = \lim b_{i,p}/p$.
Let $L(D_0)$ be the intersection lattice of the (reduced) arrangement $D_0 = H_1 \cup \dots \cup H_r$,
and for $W \in L(D_0)$ let $s_{\infty}(W) = \sum \{ b_{i,\infty} : W \subseteq H_i \}$, $r(W) = \codim(W)$.
Then
\[  \multideal(t \cdot D_{\bullet}) = \bigcap_{W \in L(D_0)} I_W^{\lfloor t \cdot s_{\infty}(W) \rfloor + 1 - r(W)} = \multideal(t \cdot D_{\infty}) , \]
where $D_{\infty}$ is defined as above.
\end{prop}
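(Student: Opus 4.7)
The plan is to chain together the immediately preceding proposition and Mustață's formula for central hyperplane arrangements. The equality $\multideal(t \cdot D_{\bullet}) = \multideal(t \cdot D_{\infty})$ is the second equality in the statement, and it is already the content of the previous proposition, so the task reduces to identifying $\multideal(t \cdot D_{\infty})$ with the displayed intersection.

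First I would verify that $D_{\infty} = \sum_i b_{i,\infty} H_i$ is a well-defined weighted central hyperplane arrangement with nonnegative real weights. The limits $b_{i,\infty} = \lim_p b_{i,p}/p$ exist and are finite by the preceding lemma applied to the subadditive sequence $b_{i,p} = \ord_{H_i}(D_p)$ (subadditivity comes from $D_p + D_q \geq D_{p+q}$), and nonnegativity passes to the limit. Mustață's theorem, quoted above, then applies to $D_{\infty}$ and gives
\[
  \multideal(t \cdot D_{\infty}) = \bigcap_{W \in L(D_{\infty})} I_W^{\lfloor t \cdot s_{\infty}(W) \rfloor + 1 - r(W)}.
\]

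The only remaining task is to enlarge the indexing set from $L(D_{\infty})$ (intersections of those $H_i$ with positive weight in $D_{\infty}$) to the possibly larger lattice $L(D_0)$ (intersections of all $H_i$). Given $W' \in L(D_0) \setminus L(D_{\infty})$, let $W$ be the intersection of those $H_i$ with $b_{i,\infty} > 0$ that contain $W'$, with the convention $W = \C^n$ if no such $H_i$ exists. By construction $s_{\infty}(W) = s_{\infty}(W')$, while $W \supsetneq W'$ forces both $I_W \subseteq I_{W'}$ and $r(W) < r(W')$. Combining these yields
\[
  I_W^{\lfloor t s_{\infty}(W)\rfloor + 1 - r(W)} \subseteq I_{W'}^{\lfloor t s_{\infty}(W')\rfloor + 1 - r(W')},
\]
so the $W'$-factor is redundant in the intersection; in the degenerate case $W = \C^n$ we have $s_{\infty}(W') = 0$, which forces the exponent for $W'$ to be nonpositive and hence its factor to be the unit ideal. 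I expect no serious obstacle here: as the author indicates, this is an immediate consequence of the previous two results, with the index-set comparison being the only real content.
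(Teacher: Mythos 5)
Your proposal is correct and follows the same route as the paper, which simply declares the proposition an immediate consequence of the preceding proposition ($\multideal(t \cdot D_{\bullet}) = \multideal(t \cdot D_{\infty})$) combined with Musta\c{t}\u{a}'s formula applied to $D_{\infty}$. Your extra care with the index set is sound (and correctly handled), though note that the paper's statement of Musta\c{t}\u{a}'s theorem already permits zero weights, so one may simply keep all of $H_1,\dots,H_r$ in $D_{\infty}$ and get $L(D_0)$ as the indexing lattice directly.
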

Again the intersection can be reduced to $W \in \mathcal{G}$ for a building set $\mathcal{G} \subseteq L(D_0)$.
The log canonical threshold is given by $\lct(D_{\bullet}) = \lct(D_{\infty}) = \min_W r(W)/s_{\infty}(W)$.

\section{Proof of Theorem}\label{sect: proof}

At this point the theorem is easy to prove.
The real work was to develop the definition of multiplier ideals and show they have the properties
described in \S\ref{sect: mult-ideals}.

We have $\multideal(I^e) \subseteq I$. Together with the subadditivity theorem this gives the following chain of inclusions:
\[
  \multideal(I^{er}) \subseteq \multideal(I^e)^r \subseteq I^r .
\]
Unfortunately $\symbolicpower{I}{er}$ is not necessarily contained in $\multideal(I^{er})$.
We must enlarge these multiplier ideals enough to contain $\symbolicpower{I}{er}$
but not too much to destroy the containment in $I^r$.
First rewrite the above as
\[
  \multideal((I^p)^{\frac{er}{p}}) \subseteq \multideal((I^p)^{\frac{e}{p}})^r \subseteq I^r .
\]
These are the same ideals by Property~\ref{property: integer powers}.
Now let $p$ be sufficiently large and divisible and enlarge $I^p$ to $\symbolicpower{I}{p}$.
The multiplier ideals become asymptotic multiplier ideals,
and we will see in a moment that the inclusions above still hold:
\[
  \asympt{er}{I} \subseteq \asympt{e}{I}^r \subseteq I^r .
\]
By Remark~\ref{rem: inclusion in asymptotic multiplier ideals} we have
$\symbolicpower{I}{er} \subseteq \asympt{er}{I}$.
So this shows $\symbolicpower{I}{er} \subseteq I^r$.

This explains why we use asymptotic multiplier ideals rather than ordinary multiplier ideals in this proof.
We arrive at the following proof of Theorem~\ref{thm:main}.

\begin{proof}
We have the following chain of inclusions:
\begin{equation}\label{eqn:chain}\tag{$\dagger$}
\begin{split}
    \symbolicpower{I}{er+kr-\ell} &= \symbolicpower{I}{er+kr-\ell} \asympt{\ell}{I} \\
      & \subseteq \asympt{(er+kr)}{I} \\
      & \subseteq \asympt{(e+k)}{I}^r \\
      & \subseteq (\symbolicpower{I}{k+1})^r
\end{split}
\end{equation}
which is justified as follows.
For $\ell < \lct(\symbi)$, $\asympt{\ell}{I} = (1)$.
The first inclusion is Remark~\ref{rem: inclusion in asymptotic multiplier ideals}.
The second inclusion holds by the subadditivity theorem.
The last inclusion is Example~\ref{example: asymptotic multiplier ideal of smooth}.
\end{proof}

Theorem~2.2 of~\cite{els:symbolic-powers} is shown by exactly the above argument with $\ell=0$.

\section{Non-improvement}\label{sect: non-improvement}

Using ``classical'' methods, Bocci--Harbourne have given some improvements in special cases
to the Ein--Lazarsfeld--Smith theorem
that $\symbolicpower{I}{er} \subseteq I^r$ for every reduced ideal $I$ with $\bight(I)=e$.
For example~\cite{bocci-harbourne} shows the resurgence of an ideal $I$ of general points in $\pr{2}$ is at most $3/2$,
so $\symbolicpower{I}{m} \subseteq I^r$ for $m \geq 3r/2$.
However, the argument given above for the proof of Theorem~\ref{thm:main},
either via asymptotic multiplier ideals or via characteristic $p$ methods,
is the only way I am aware of to show for every reduced ideal $I$ of height $e$ that
$\symbolicpower{I}{er} \subseteq I^r$ (i.e., the resurgence is at most $e$)
or even that the resurgence is finite for every reduced ideal.

One may ask, how far can the same multiplier ideal methods be pushed to improve the bounds in
the Ein--Lazarsfeld--Smith theorem?

\subsection{Restriction of log canonical threshold}
The value $\ell$ in Theorem~\ref{thm:main} is severely restricted.
Let $e'$ be the minimum of the codimensions of the irreducible components of $\Zeros(I)$.
We saw $0 < \lct(I) \leq e'$,
but it often happens that $\lct(I)$ is much smaller than $e'$.
For $I$ a homogeneous ideal in $\C[x_1,\dots,x_n]$, we have
\[
  \frac{1}{\mult_0(I)} \leq \lct(I) \leq \frac{n}{\mult_0(I)}
\]
(\cite[9.3.2-3]{pag2}),
where $\mult_0(I)$ is the multiplicity of $I$ at the origin, equivalently, the least degree
of a nonzero form in $I$.
So if $\lct(I)>1$ then $I$ must contain a form of degree strictly less than $n$.

For ideals of reduced sets of points in $\pr{2}$ one can show the converse,
so $\lct(I)>1$ if and only if the points lie on a conic (which may be reducible).
So Theorem~\ref{thm:main} implies Harbourne's conjecture and answers Huneke's question
only for points on a conic, which (for smooth conics at least) had already been treated by
Bocci--Harbourne~\cite{bocci-harbourne-resurgence}.

We only need $\ell < \lct(\symbi)$,
which is a priori less restrictive than $\ell < \lct(I)$, but still restricts us to $\ell \leq e'-1$.
Indeed, there are radical ideals $I$ with $\lct(I) < \lct(\symbi)$.
However I do not know of an ideal $I$ such that there is an integer $\ell$, $\lct(I) \leq \ell < \lct(\symbi)$.

For a radical homogeneous ideal $I$,
\[
  \lct(\symbi) \leq \frac{n}{{\displaystyle \lim_{p \to \infty}} \ \frac{1}{p} \mult_0(\symbolicpower{I}{p}) }
\]
where the limit exists because $\mult_0(\symbolicpower{I}{p}) + \mult_0(\symbolicpower{I}{q}) \geq \mult_0 (\symbolicpower{I}{p+q})$.
If $\lct(\symbi) > 1$ then for some $p$ there must be a homogeneous form $F$ vanishing to order $p$
along the variety defined by $I$, of degree strictly less than $pn$.
This is weaker than the requirement that if $\lct(I)>1$ then $I$ must contain a form of degree less than $n$,
which is the same statement with the added condition $p=1$; but it does not seem very much weaker.

\subsection{The second inclusion}

Let $I = (xy,xz,yz) \subseteq \C[x,y,z]$ be the ideal of the union of the three coordinate axes.
Using Howald's theorem and its asymptotic version one can compute all the ideals appearing in~\eqref{eqn:chain}.
Since they are all integrally closed monomial ideals, we give them by giving their Newton polyhedra.
Here $e=2$; we take $k=0$.
First,
\[
  N_{\bullet} = \{ \, (a,b,c) \, \mid \, a+b,a+c,b+c \geq 1 \, \} \ni \Big( \frac{1}{2},\frac{1}{2},\frac{1}{2} \Big) .
\]
We have $\lct(I) = 3/2$ and $\lct(\symbi)=2$, so we take $\ell=1$.
Now,
\begin{align*}
  \Newt \left[ \symbolicpower{I}{2r-1} \right] &= \{ \, (a,b,c) \, \mid \, a+b,a+c,b+c \geq 2r-1, \, a+b+c \geq 3r-1 \, \} , \\
  \Newt \left[ \asympt{2r}{I} \right] &= \{ \, (a,b,c) \, \mid \, a+b,a+c,b+c \geq 2r-1, \, a+b+c \geq 3r-1 \, \} , \\
  \Newt \left[ \left(\asympt{2}{I}\right)^r \right] &= \{ \, (a,b,c) \, \mid \, a+b,a+c,b+c \geq r, \, a+b+c \geq 2r \, \} , \\
  \Newt \left[ I^r \right] &= \{ \, (a,b,c) \, \mid \, a+b,a+c,b+c \geq r, \, a+b+c \geq 2r \, \} .
\end{align*}
This example shows that the place where improvements are needed is the second inclusion in~\eqref{eqn:chain},
which relies on the subadditivity theorem.

\bibliography{biblio}

\end{document}